\documentclass[a4paper,12pt]{amsart}

%%%%%%%%%%%%%%%%%%%%%%%%%%%%%%%%%%%%%%%%%%%%%%%%%%%%%%%%%%%%%%%%%%%%%%%%%%%%%%%%%%%%%%%%%%%%%%%%%%%%%%%%%%%%%%%%%%%%%%%%%%%%%%%%%%%%%%%%%%%%%%%%%%%%%%%%%%%%%%%%%%%%%%%%%%%%%%%%%%%%%%%%%%%%%%%%%%%%%%%%%%%%%%%%%%%%%%%%%%%%%%%%%%%%%%%%%%%%%%%%%%%%%%%%%%%%
\usepackage{amsmath,amssymb,amsfonts,amsthm,amsopn}
\usepackage{graphicx,color}

%\usepackage{geometry}
%\geometry{a4paper,top=3cm,bottom=3cm,left=3.8cm,right=3.8cm,%
%heightrounded,bindingoffset=0mm}

%tikz}%,forest}
%\usepackage[all]{xy}
\usepackage{amsmath}

\usepackage{float}
\usepackage{caption}
\captionsetup[figure]{font=small}
\captionsetup{width=\linewidth}
\usepackage{geometry}
\geometry{
	a4paper,
	total={140mm,230mm},
	left=35mm,
	top=40mm,
	bottom=45mm,}
%\addtolength{\textwidth}{-2.5in}
%\calclayout

\newtheorem{theorem}{Theorem}[section]
\newtheorem{lemma}[theorem]{Lemma}

\theoremstyle{remark}

\newtheorem*{remark*}{Remark}

\theoremstyle{definition}

\def\bC{\mathbb{C}}

\newcommand\Aa{{\mathcal{A}_\alpha}}

\numberwithin{equation}{section}
\title[Weighted contractivity for derivatives in Bergman space]{Weighted contractivity for derivatives of functions in the Bergman space on the unit disk}
%\author{David Kalaj and Petar Melentijevi\'c}
\begin{document}

\makeatletter
\@namedef{subjclassname@2020}{%
  \textup{2020} Mathematics Subject Classification}
\makeatother
\numberwithin{equation}{section}
\title[Weighted contractivity for derivatives in Bergman space]{Weighted contractivity for derivatives of functions in the Bergman space on the unit disk}
%\author{David Kalaj and Petar Melentijevi\'c}

%\date{11 October, 2005}

\keywords{Hyperbolic metric, weighted Bergman spaces, isoperimetric inequality, sharp estimates}
\subjclass[2020]{Primary 30H20; Secondary 31A05,  33C05}
\author{David Kalaj}
\address{University of Montenegro, Faculty of Natural Sciences and
Mathematics, Cetinjski put b.b. 81000 Podgorica, Montenegro}
\email{davidk@ucg.ac.me}

\author{Petar Melentijevi\'c}
\address{University of Belgrade, Faculty of Mathematics, Studentski trg 16, 11000, Beograd}
\email{petar.melentijevic@matf.bg.ac.rs}
\thanks{The second author is partially supported by MPNTR grant 174017, Serbia.}

\begin{abstract} In a recent paper \cite{ramos}, Ramos and Tilli proved certain sharp inequality for analytic functions in subdomains of the unit disk. We will generalize their main inequality for derivatives of functions from Bergman space with respect to two different measures. Some connections with an analog for the Fock spaces, earlier investigated in \cite{Kalaj}, will also be discussed.

\end{abstract}
\maketitle
%\tableofcontents
%\sloppy

\section{Introduction}

\subsection{Bergman spaces on $\bC^+$ and $\mathbb{D}$}For every $\alpha>-1$,
the Bergman space $\Aa(\mathbb{D})=\Aa$ of the disc is the Hilbert space of all functions
$f:\mathbb{D}\to \bC$ which are holomorphic in the unit disk $\mathbb{D}$ and satisfying
\[
\Vert f\Vert_\Aa^2 := \int_{\mathbb{D}} |f(z)|^2 (1-|z|^2)^{\alpha} \,dA(z) <+\infty.
\]
%Analogously, the Bergman space of the upper half place $\Aa(\bC^+)$ is defined as the set of analytic functions in $\bC^+$ such that
%\[
%\|f\|_{\Aa(\bC^+)}^2 = \int_{\bC^+} |f(z)|^2 s^{\alpha} \, d\mu^+(z),
%\]
%where $d \mu^+$ stands for the normalized area measure on $\bC^+.$ These two spaces defined above do not only share similarities in their definition, but indeed it can be shown that they are \emph{isomorphic:} if one defines
%\[
%T_{\alpha}f(w) = \frac{2^{\alpha/2}}{(1-w)^{\alpha+2}} f \left(\frac{w+1}{i(w-1)} \right),
%\]
%then $T_{\alpha}$ maps $\Aa(\bC^+)$ to $\Aa(\mathbb{D})$ as a \emph{unitary isomorphism.} For this reason, dealing with one space or the other is equivalent, an important fact in the proof of the main theorem below.

%For the reason above, let us focus on the case of $\mathbb{D}$, and thus we abbreviate $\Aa(\mathbb{D}) = \Aa$ from now on.
The weighted $L^2$ norm defining this space is induced by the scalar product
\[
\langle f,g\rangle_\alpha := \int_{\mathbb{D}} f(z)\overline{g(z)} (1-|z|^2)^{\alpha}\, dA(z).
\]
Here and throughout, $dA(z)=dxdy$ denotes the two-dimensional Lebesgue measure on $\mathbb{D}$.

Given $w\in \mathbb{D}$, the reproducing kernel relative to $w$, i.e. the (unique) function
$K_w\in\Aa$ such that
\begin{equation}
\label{repker}
f(w)=\langle f,K_w\rangle_\alpha\quad\forall f\in\Aa,
\end{equation}
is given by
\[
K_w(z):=  \frac {\alpha+1}\pi(1-\overline{w}z)^{-\alpha-2},\quad z\in \mathbb{D};
\]
 note that $K_w\in\Aa$.  We refer the reader to \cite{hakan, Seip} and the references therein for further meaningful properties in the context of Bergman spaces.

Recently, in the papers \cite{Kulikov, NicolaTilli} a new method for studying distribution for analytic functions in hyperbolic and Euclidean case was discovered. Some extensions of these results are  given in \cite{Kalaj, Kalaj1, OrtegaCerda, Frank}, while in \cite{Llineares, Petar1}, the reader can find (partial) solutions of some other problems where the theorems from \cite{Kulikov} are used. Here, we will turn our attention to an another paper in which these methods are used, \cite{ramos}, where the Radon measure
\[
\mu(\Omega):=\int_\Omega (1-|z|^2)^{-2}dA(z),\quad\Omega\subseteq \mathbb{D},
\]
on the disc $\mathbb{D}$ is considered. This measure is, in fact, the area measure in the usual Poincar\'e model of the hyperbolic space (up to a multiplicative factor 4). 

The authors of \cite{ramos} maximized the quantity
\begin{equation}\label{eq:optimal-bergman-object}
R(f,\Omega) =
\frac{\int_\Omega |f(z)|^2 (1 - |z|^2)^{\alpha+2} \, d\mu(z)}{\Vert f \Vert_\Aa^2},
\end{equation}
over all \( f \in \Aa \) and \( \Omega \subset \mathbb{D} \) with \( \mu(\Omega) = s/4 \), because this approach exploits the equivalence between the norm in the Hardy space \( H^2(\mathbb{C}^+) \) and the transformed Bergman norm. Specifically, the functional \( R(f, \Omega) \) encapsulates the interaction of the Bergman weight \( (1 - |z|^2)^{\alpha+2} \) and the geometry of \( \Omega \), while being normalized by \( \Vert f \Vert_\Aa^2 \), the square of the \( \Aa \)-norm of \( f \).

The maximization aligns with the property that
\[
\Vert f \Vert_{H^2(\mathbb{C}^+)}^2 = \Vert T_{\alpha}(B_{\alpha})f \Vert_\Aa^2,
\]
where \( B_{\alpha} \) is the Bergman transform of order \( \alpha \), defined as
\[
B_{\alpha}f(z) = 2^{\alpha}\Gamma(\alpha+1) \int_{0}^{+\infty} t^{\frac{\alpha+1}{2}} \hat{f}(t) e^{i zt} \, dt,
\]
and \( T_{\alpha} \) is a transformation that maps functions into a weighted Bergman space, given by
\[
T_{\alpha}f(z) = \frac{2^{\frac{\alpha}{2}}}{(1 - z)^{\alpha+2}} f\left(\frac{z+1}{i(z-1)}\right).
\]

This formulation demonstrates how \( R(f, \Omega) \) not only reflects the structure of \( \Aa \) and \( \Omega \), but also ensures compatibility with the integral transform framework, thereby offering an optimal functional for analyzing Bergman-type spaces under geometric constraints on \( \Omega \).

More precisely, the main result of Ramos and Tilli in \cite{ramos} is the following theorem.
\begin{theorem}\label{thm:main-bergman} Let $\alpha>-1,$ and $s>0$ be fixed. Among all functions $f\in \Aa$ and among
all measurable sets $\Omega\subset \mathbb{D}$ such that $\mu(\Omega)=s$, the quotient $R(f,\Omega)$ as defined in \eqref{eq:optimal-bergman-object} satisfies the inequality
\begin{equation}\label{eq:upper-bound-quotient}
	R(f,\Omega) \le R(1,\mathbb{D}_s),
\end{equation}
where $\mathbb{D}_s$ is a disc centered at the origin with $\mu(\mathbb{D}_s) = s.$ Moreover, there is equality in \eqref{eq:upper-bound-quotient} if and only if $f$ is a multiple of some
reproducing kernel $K_w$ and $\Omega$ is a ball centered at
$w$, such that $\mu(\Omega)=s$.
\end{theorem}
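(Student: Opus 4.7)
The plan is to follow the concentration/rearrangement strategy pioneered by Nicola--Tilli and Kulikov, adapted here to the hyperbolic geometry of $\mathbb{D}$. After normalizing $\Vert f\Vert_\Aa = 1$, I would rewrite the numerator in terms of the hyperbolic area measure $d\mu = (1-|z|^2)^{-2}\,dA$:
\[
\int_\Omega |f(z)|^2(1-|z|^2)^\alpha\, dA(z) \;=\; \int_\Omega F(z)\, d\mu(z),\qquad F(z):=|f(z)|^2 (1-|z|^2)^{\alpha+2}.
\]
Then $\int_{\mathbb{D}} F\, d\mu = 1$ and $\Omega$ enters only through $\mu(\Omega)=s$, so for each fixed $f$ the maximizing set is a superlevel set $\{F>t\}$. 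The problem reduces to proving
\[
u(s):=\int_0^s F^{*}(\sigma)\,d\sigma \;\le\; u_0(s)\qquad \forall\, s>0,
\]
where $F^{*}$ is the decreasing rearrangement of $F$ with respect to $d\mu$ and $u_0$ is the analogous concentration function associated with the normalized reproducing kernel $F_0(z):=|K_0(z)|^2(1-|z|^2)^{\alpha+2}/\Vert K_0\Vert_\Aa^2$.

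The key analytical input is a sharp pointwise bound on the hyperbolic Laplacian of $\log F$. Since $f$ is holomorphic, $\Delta \log |f|^2 \ge 0$ (as distributions, with equality off $f^{-1}(0)$), while a direct calculation gives $\Delta \log(1-|z|^2)^{\alpha+2} = -4(\alpha+2)(1-|z|^2)^{-2}$. Converting to the hyperbolic Laplacian $\Delta_{\mathrm{hyp}} = \tfrac14(1-|z|^2)^2\Delta$ yields
\[
\Delta_{\mathrm{hyp}} \log F \;\le\; -(\alpha+2)\qquad\text{on } \mathbb{D}\setminus f^{-1}(0),
\]
with equality exactly when $f$ is zero-free. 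Integrating this over the superlevel sets $\Omega_t=\{F>t\}$ via the divergence theorem, combined with the coarea formula for $m(t):=\mu(\Omega_t)$ and the hyperbolic isoperimetric inequality (which bounds the hyperbolic perimeter of $\partial \Omega_t$ from below by that of a hyperbolic disk of area $m(t)$), produces a first-order differential inequality for $u$ as a function of $s=m(t)$. A direct symmetry computation shows that $F_0$ saturates every step of this chain --- its superlevel sets are hyperbolic disks centered at the origin and $\Delta_{\mathrm{hyp}}\log F_0\equiv -(\alpha+2)$ --- so $u_0$ solves the corresponding ODE, and an ODE comparison with matched boundary conditions (from $\int F\,d\mu = 1$ and the behavior as $s\to 0^{+}$) yields $u\le u_0$.

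The main obstacle will be extracting the sharp first-order inequality cleanly and carrying out the rigidity analysis. One has to handle the a.e.\ differentiability of $m$ and $u$, verify that the distributional Dirac contributions at the zeros of $f$ only \emph{improve} the inequality, and organize the computation so that the isoperimetric deficit appears with the correct sign. For the equality case, rigidity in the hyperbolic isoperimetric inequality forces all superlevel sets of $F$ to be hyperbolic disks around a single point $w\in\mathbb{D}$, while saturation of the subharmonicity bound forces $f$ to be zero-free; together these constraints force $|f|^2(1-|z|^2)^{\alpha+2}$ to depend only on the hyperbolic distance to $w$, and a zero-free analytic function with this property must be a constant multiple of $K_w$. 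The set $\Omega$ must then coincide with the corresponding superlevel set, which is precisely a hyperbolic disk centered at $w$, yielding the claimed characterization of extremizers; as noted in the introduction, the isomorphism $T_\alpha$ allows one to transfer the argument to $\bC^+$ whenever convenient.
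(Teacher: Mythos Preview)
The paper does not give its own proof of this theorem --- it is quoted from Ramos--Tilli \cite{ramos}, with only the remark that the argument rests on the hyperbolic isoperimetric inequality together with the pointwise bound \eqref{pointwise}; the same scheme is then carried out in detail in Section~2 for the derivative case, and your outline matches it step for step (superlevel sets of $F=|f|^2(1-|z|^2)^{\alpha+2}$, coarea formula, Cauchy--Schwarz, divergence theorem, hyperbolic isoperimetry, ODE comparison, rigidity). One correction: your displayed bound should read $\Delta_{\mathrm{hyp}}\log F \ge -(\alpha+2)$, not $\le$ --- away from the zeros of $f$ it is an \emph{equality}, and the Dirac masses at the zeros push the distributional Laplacian upward, which is precisely the direction needed when you integrate over the superlevel set and apply the divergence theorem (compare the inequality $\Delta\log u_n \ge -4(n+1)(n+\alpha)(1-|z|^2)^{-2}$ in Section~2).
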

Note that, in the Poincar\'e disc model in two dimensions, balls in the pseudohyperbolic metric coincide with Euclidean balls, but the Euclidean and hyperbolic centers differ in general, as well as the respective radii.

The proof of Theorem~\ref{thm:main-bergman} is based on the isoperimetric inequality for hyperbolic metric on the unit disk and the point-wise estimate (\cite{Vukotic})
\begin{equation}
\label{pointwise}
|f(z)|^2\le \frac {\alpha+1}\pi (1-|z|^2)^{-{(\alpha+2)}}\|f\|_{\Aa}^2, \ \ z\in\mathbb{D}.
\end{equation}

The aim  of this paper is to generalize this result for the higher order derivatives of holomorphic functions. In order to do so, define the Gaussian hypergeometric function $$F(a,b;c;z)=\sum_{n=0}^\infty \frac{(a)_n (b)_n}{(c)_n n!} z^n,$$ where $(d)_n=d(d+1)\dots (d+n-1)$ is the Pochhammer symbol.

We begin by the following lemma (an analog of \eqref{pointwise} for derivatives):

\begin{lemma}
Let $$g(|z|)= ({\alpha+2})_n \cdot n! \cdot (1-|z|^2)^{-\alpha-2-2n}F\left(-1-\alpha-n,-n;1;r^2\right).$$
	If $f\in \Aa$, then $$|f^{(n)}(z)|^2\le g(|z|)\|f\|_{A_\alpha}^2.$$ Here $\|f\|_{A_\alpha}^2:=\frac{\alpha+1}{\pi}\|f\|_{\Aa}^2.$
\end{lemma}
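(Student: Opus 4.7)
The plan is to expand $f$ in its Taylor series at the origin and exploit the fact that the monomials $\{z^k\}$ form an orthogonal basis of $A_\alpha$. Applying the Cauchy--Schwarz inequality term-by-term reduces the bound to a single explicit power series in $|z|^2$, which is then identified with a Gauss hypergeometric function and transformed into the stated form via Euler's identity.

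More concretely, writing $f(z) = \sum_{k \ge 0} a_k z^k$, the standard computation in polar coordinates together with the Beta integral yields $\|z^k\|_\alpha^2 = \pi\, k!\,\Gamma(\alpha+1)/\Gamma(k+\alpha+2)$, and orthogonality gives $\|f\|_\alpha^2 = \sum_k |a_k|^2 \|z^k\|_\alpha^2$. Differentiating termwise and reindexing, $f^{(n)}(z) = \sum_{j\ge 0} a_{j+n}\,\frac{(j+n)!}{j!}\,z^j$, and Cauchy--Schwarz with weights $\|z^{j+n}\|_\alpha^2$ produces
\[
|f^{(n)}(z)|^2 \le \|f\|_\alpha^2 \,\cdot\, \sum_{j\ge 0} \frac{[(j+n)!/j!]^2}{\|z^{j+n}\|_\alpha^2}\, |z|^{2j}.
\]
Substituting the norm formula and using $(j+n)! = n!(n+1)_j$ together with $\Gamma(j+n+\alpha+2) = \Gamma(n+\alpha+2)(n+\alpha+2)_j$, the inner sum collapses to a constant multiple of $F\bigl(n+1, n+\alpha+2; 1; |z|^2\bigr)$.

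To bring this into the form announced in the statement, I would then invoke Euler's transformation $F(a,b;c;w) = (1-w)^{c-a-b} F(c-a, c-b; c; w)$ with $a = n+1$, $b = n+\alpha+2$, $c = 1$: this pulls out a factor of $(1-|z|^2)$ raised to a negative power matching the $(1-|z|^2)^{-\alpha-2n}$ in the statement (up to a Pochhammer shift absorbed into the constant), and turns the hypergeometric into one whose second parameter is $-n$, producing a polynomial of degree $n$ in $|z|^2$. The Pochhammer prefactor is then reorganised using the identity $\Gamma(n+\alpha+2)/\Gamma(\alpha+1) = (\alpha+1)_{n+1}$ into the announced form $(\alpha)_n \cdot n!$.

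The main obstacle will be the careful bookkeeping of Gamma and Pochhammer symbols needed to match the precise parameters of the hypergeometric factor $F(1-\alpha-n, -n; 1; r^2)$ and the exponent of $(1-|z|^2)$ exactly as stated. An equivalent route, which provides a useful cross-check, is to differentiate the reproducing identity $f(w) = \langle f, K_w\rangle_\alpha$ $n$ times in $w$ to get $f^{(n)}(w) = \langle f, \partial_{\bar w}^n K_w\rangle_\alpha$; Cauchy--Schwarz then gives $|f^{(n)}(w)|^2 \le \|\partial_{\bar w}^n K_w\|_\alpha^2 \cdot \|f\|_\alpha^2$, and the kernel norm is computed by the same series manipulations, leading to the identical hypergeometric expression after Euler's transformation.
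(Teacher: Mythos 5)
Your proposal is essentially the paper's own proof: the authors differentiate the reproducing identity $n$ times, apply Cauchy--Schwarz, and evaluate the resulting kernel norm by Parseval in polar coordinates, then finish with Euler's transformation --- exactly the ``equivalent route'' you offer as a cross-check, and your primary route (Taylor coefficients plus term-by-term Cauchy--Schwarz) is the same Parseval computation in different clothing. Both are valid and yield the same sharp series.

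The one point you should not dismiss as mere bookkeeping is the normalization, because as written your computation will \emph{not} land on the stated formula. With $\|z^k\|_\alpha^2=\pi\,k!\,\Gamma(\alpha+1)/\Gamma(k+\alpha+2)$ (the introduction's weight $(1-|z|^2)^\alpha\,dA$), your inner sum is
\[
\frac{n!\,\Gamma(n+\alpha+2)}{\pi\,\Gamma(\alpha+1)}\,F\bigl(n+1,\,n+\alpha+2;\,1;\,|z|^2\bigr),
\]
and Euler's transformation then produces the exponent $c-a-b=-\alpha-2n-2$ and parameters $(-n,\,-\alpha-n-1)$, with constant $n!\,(\alpha+1)_{n+1}/\pi$ --- i.e.\ the statement with $\alpha$ replaced by $\alpha+2$ and an extra factor $(\alpha+1)/\pi$, not the announced $(\alpha)_n\,n!\,(1-|z|^2)^{-\alpha-2n}F(1-\alpha-n,-n;1;|z|^2)$. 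The resolution is that the lemma (like the quoted pointwise estimate $|f(z)|^2\le(1-|z|^2)^{-\alpha}\|f\|_\alpha^2$, which it must reduce to at $n=0$, and like the reproducing formula $f(z)=\int f(w)(1-z\overline w)^{-\alpha}\,dA_\alpha(w)$ used in the paper's proof) is tacitly stated for the probability-normalized weight $\frac{\alpha-1}{\pi}(1-|z|^2)^{\alpha-2}\,dA$, so that $\|z^k\|^2=k!\,\Gamma(\alpha)/\Gamma(k+\alpha)$. With that norm your computation gives exactly $n!\,(\alpha)_n\,F(n+1,n+\alpha;1;|z|^2)$, and Euler's transformation with $c-a-b=-\alpha-2n$, $c-a=-n$, $c-b=1-\alpha-n$ yields the stated $g$. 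So your argument is correct in substance; just fix the convention before chasing the Gamma factors, or you will be trying to match parameters that genuinely differ by $2$.
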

\begin{proof}
	From reproducing formula for functions from the Bergman space
	$$  f(z)=\int_{\mathbb{D}}\frac{f(w)}{(1-z\overline{w})^{{\alpha+2}}}dA_\alpha(w),$$ where $$dA_\alpha(w)=\frac{\alpha+1}{\pi}(1-|w|^2)^{{\alpha}},$$
	we find
	$$f^{(n)}(z)=\int_{\mathbb{D}}\frac{f(w)(\alpha+2)(\alpha+3)\cdots(n+\alpha+1)\overline{w}^n}{(1-z\overline{w})^{n+2+\alpha}}dA_{\alpha}(w)$$
	and consequently, by Cauchy-Schwarz inequality:
	$$|f^{(n)}(z)|\leq (\alpha+2)(\alpha+3)\cdots(n+{\alpha+1})\|f\|_{A_{\alpha}}\sqrt{\int_{\mathbb{D}}\frac{|w|^{2n}}{|1-z\overline{w}|^{2{(\alpha+2)}+2n}}dA_{\alpha}(w)}. $$
	The integral can be calculated using polar coordinates and Parseval's formula, thus giving:
	\[\begin{split}\frac{\alpha+1}{\pi}&\int_{\mathbb{D}}|1-z\overline{w}|^{-2{(\alpha+2)}-2n}|w|^{2n}(1-|w|^2)^{{\alpha}}dA(w)
	\\&=2({\alpha+1}) \int_{0}^{1}\rho^{2n+1}(1-\rho^2)^{{\alpha}}\bigg(\frac{1}{2\pi}\int_{0}^{2\pi}|1-z\rho e^{\imath\theta}|^{-2{(\alpha+2)}-2n}d\theta\bigg)d\rho
	\\&=2({\alpha+1})\sum_{k=0}^{+\infty}\binom{{\alpha+1}+n+k}{k}^2|z|^{2k}\int_{0}^{1}\rho^{2n+2k+1}(1-\rho^2)^{{\alpha}}d\rho
\\&=\frac{n!}{({{\alpha+2}})_n}F(n+1,{\alpha+2}+n;1;|z|^2).\end{split}\]
	Hence,
	$$|f^{(n)}(z)|^2\leq n!({{\alpha+2}})_n F(n+1,{\alpha+2}+n;1;|z|^2)\|f\|^2_{A_\alpha},$$
	and by, Euler's transformation, the last hypergeometric function is equal to $(1-|z|^2)^{-2n-{(\alpha+2)}}F(-n,-{\alpha}-n-1;1;|z|^2).$
	\end{proof}
Similar estimates for the first and higher order derivatives for functions in weighted Bergman and Fock spaces can be found in \cite{DavidDjordjije, Petar}.

Now define two weighted measures $$d\mu_{n,\alpha}(z)=\frac{({\alpha+1}+2n)(1-|z|^2)^{{\alpha+2}+2n}{d\mu(z)}}{\pi ({{\alpha+2}})_n \cdot n! \cdot F\left(-1-{\alpha}-n,-n;1;|z|^2\right)}$$
and
$$d\nu_{n,\alpha}(z)=\frac{\Gamma({{\alpha+2}})(1-|z|^2)^{{\alpha+2}+2n}{d\mu(z)}}{\pi \Gamma({\alpha+1}+2n) }$$
and consider the space of holomorphic functions $f$ defined on the unit disk, so that $$\|f\|_{n,\alpha}^2:=\int_{\mathbb{D}} |f(z)|^2 d\mu_{n,\alpha}(z)<\infty.$$ Denote this space by $\mathcal{A}_{n,\alpha}.$ Observe that, for $n=0$, $\mathcal{A}_{n,\alpha}=\mathcal{A}_{\alpha}.$

The motivation for considering the first of them comes from the pointwise estimate \eqref{pointwise}. Since
every measure of the form $C (1-|z|^2)^{{\alpha+2}+2n}$ on the unit disk $\mathbb{D}$ is equivalent to $d\mu_{n,\alpha},$ it also make sense to consider $d\nu_{n,\alpha}.$ At the end of our paper we will see that, after certain limitting process, the results for the measure $d\mu_{n,\alpha}$ can be transferred to the appropriate results for Fock spaces, earlier proved in \cite{Kalaj}.

Now we formulate the following extension of Theorem~\ref{thm:main-bergman} for $n\ge 1$.
\begin{theorem}\label{kalaj}
Assume that $f\in \mathcal{A}_{\alpha}$ and define the $n-$quotient $$R_{n,\mu}(f,\Omega)=\frac{\int_{\Omega} |f^{(n)}(z)|^2 d\mu_{n,{\alpha}}}{\|f\|^2_{\mathcal{A}_{\alpha}}}$$
and the analogous $R_{n,\nu}(f,\Omega)$ with the measure $d\nu_{n,{\alpha}}.$ Then we have the following sharp inequalities
\begin{equation}
\label{rqomu}
R_{n,\mu}(f,\Omega)\le \left(1-(1+s/\pi)^{1-X}\right), \quad \text{for}\quad 1\leq n \leq 3,
\end{equation}
\begin{equation}
\label{rqonu}
R_{n,\nu}(f,\Omega)\le \left(1-(1+s/\pi)^{1-2n-{(\alpha+2)}}\right), \quad \text{for}\quad n\geq 1,
\end{equation}
where $\mu(\Omega)=s$ and $X= (n+1)(n+2+\alpha).$
The equality can not be attained in any of these two estimates.
\end{theorem}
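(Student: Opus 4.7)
My plan is to adapt the Ramos--Tilli isoperimetric--rearrangement strategy to the higher-derivative setting. The crucial preparatory observation is that the weight $d\mu_{n,\alpha}$ has been engineered so that $g(|z|)\,d\mu_{n,\alpha}(z)=\tfrac{\alpha+2n-1}{\pi}\,dA(z)$---the Gaussian hypergeometric factor cancels exactly. Writing $d\mu_{n,\alpha}=\rho\,d\mu$, where $d\mu=(1-|z|^2)^{-2}\,dA$ is the hyperbolic area element, the pointwise bound from the preceding lemma translates into
\[
u(z):=\frac{|f^{(n)}(z)|^2\,\rho(z)}{\|f\|_\alpha^2}\ \leq\ \frac{(\alpha+2n-1)(1-|z|^2)^2}{\pi},
\]
so that $R_{n,\mu}(f,\Omega)$ becomes the hyperbolic integral $\int_\Omega u\,d\mu$ of a bounded, effectively radial density.

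For each fixed $f$, the layer-cake / rearrangement identity yields that $\sup_{\mu(\Omega)=s}\int_\Omega u\,d\mu$ is attained on the super-level set $\Omega_s:=\{u>t_s\}$ with $\mu(\Omega_s)=s$; introducing the concentration function $F(s):=\int_{\Omega_s}u\,d\mu$ one has $F'(s)=t_s$. The target estimate is then equivalent to the differential inequality
\[
F'(s)\ \leq\ \frac{(X-1)(1-F(s))}{\pi+s},\qquad F(0)=0,
\]
which integrates explicitly to $F(s)\leq 1-(1+s/\pi)^{1-X}$. To establish this ODE bound I would invoke the Kulikov / Nicola--Tilli / Ramos--Tilli machinery: the log-subharmonicity of $|f^{(n)}|^2$ combined with the hyperbolic isoperimetric inequality produces a lower bound on the perimeter of $\Omega_s$, which via the coarea formula controls the level value $t_s=F'(s)$ in terms of $F(s)$ and $s$. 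The specific exponent $X-1=(n+1)(n+\alpha)-1$ emerges from the joint contribution of the power $(1-|z|^2)^{\alpha+2n+2}$ in $\rho$ and a monotonicity estimate on the polynomial $F(1-\alpha-n,-n;1;r^2)$.

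The $\nu$-case is analogous and simpler: $d\nu_{n,\alpha}/d\mu$ is a pure power of $(1-|z|^2)$ with no hypergeometric factor, so the same argument gives the constant $2n+\alpha-1$ in place of $X-1$ and goes through for every $n\geq 1$. Strictness of the inequality is obtained by tracking when each step of the chain can be saturated: simultaneous saturation of the pointwise estimate (which forces $f$ to be proportional to the reproducing kernel for the functional $f\mapsto f^{(n)}(w)$ at a single $w$) and of the isoperimetric step (which forces $\Omega_s$ to be a hyperbolic disk centred at $w$) over-determines the pair $(f,\Omega_s)$ and leads to a contradiction. The \textbf{main obstacle} is precisely the monotonicity/log-convexity estimate for the polynomial $F(1-\alpha-n,-n;1;r^2)$ of degree $n$ in $r^2$ that is needed in the $\mu$-case: for $n=1,2,3$ it reduces to a short algebraic verification coefficient-by-coefficient, but becomes increasingly intricate and apparently fails for larger $n$, which accounts exactly for the restriction in \eqref{rqomu}.
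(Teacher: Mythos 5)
Your outline reproduces the Ramos--Tilli skeleton correctly (superlevel sets of $u_n=|f^{(n)}|^2/g$ in the hyperbolic measure, coarea formula, the lower bound on $\Delta\log u_n$ via $\Delta\log g$, the hyperbolic isoperimetric inequality, and the resulting differential inequality), but it has a genuine gap: you never supply the normalization $I_n(+\infty)=\int_{\mathbb D}u_n\,d\mu\le 1$, and without it the argument does not close. The machinery you invoke yields the \emph{second-order} inequality $I_n''+X I_n'(\pi+s)^{-1}\ge 0$; together with $I_n(0)=0$ this says nothing by itself (any linear function satisfies it). To pass to your first-order inequality $F'(s)\le (X-1)(1-F(s))/(\pi+s)$, or to run the convexity argument with $J(t)=I_n(T(t))$, one needs $F(+\infty)\le 1$. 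The pointwise bound you cite gives only $u_n(z)\,d\mu\le\frac{\alpha+2n-1}{\pi}\,dA$, hence $\int_{\mathbb D}u_n\,d\mu\le \alpha+2n-1$, which is too weak. What is actually required is the global contractivity inequality
\[
\int_{\mathbb{D}}|f^{(n)}(z)|^2(1-|z|^2)^{-2}\,d\mu_{n,\alpha}(z)\ \le\ \|f\|^2_{\mathcal A_\alpha}
\]
(Lemma~\ref{petar} in the paper, suitably normalized), proved by Taylor expansion and Parseval, i.e.\ by a coefficient-by-coefficient comparison. For $d\nu_{n,\alpha}$ this reduces to the elementary monotonicity of $c_{k,n}=\frac{k!}{(k-n)!}\frac{\Gamma(k+\alpha)}{\Gamma(k+n+\alpha)}$, but for $d\mu_{n,\alpha}$ it becomes a delicate inequality involving the partial-fraction decomposition of $1/F(1-\alpha-n,-n;1;t)$, the zeros of Jacobi polynomials, Vieta's formulas and an induction on the coefficient index; this occupies all of Section~3 of the paper and is the \emph{actual} source of the restriction $1\le n\le 3$.

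Relatedly, you misattribute that restriction. The estimate $\Delta\log g\le 4(n+1)(n+\alpha)(1-|z|^2)^{-2}$ on the hypergeometric factor (your ``main obstacle'') is established in the paper for \emph{every} $n\ge 0$, using the Biernacki--Krzy\.z and Heikalla--Vamanamurthy--Vuorinen monotonicity results for ratios of hypergeometric functions; it is not where the argument breaks down for $n\ge 4$. Finally, your sketch of non-attainment of equality by ``over-determination'' is plausible but would need to be made precise; as stated it is not a proof.
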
 This question is well-motivated regarding recent result by Kalaj, where similar estimate is proved for Fock spaces. In fact, we will see that after a limitting argument his estimate follows from ours, i.e. from the case of the weighted Bergman spaces. Let us, also, say that, in \cite{AbreuSpeckbacher}, the authors has mentioned that they expect certain Bergman space analogs to Kalaj's estimates which strongly depend on some special inequalities with Jacobi polynomials. This has turned out to be true and we will come back to this point in Lemma 2.1.

We believe that \eqref{rqomu} holds for each $n\geq 1,$ but we were not able to prove this.

\section{Proof of Theorem~\ref{kalaj}}

In this section we will prove the Theorem \ref{kalaj}. In order to do this,
we need the following lemmas
\begin{lemma} \label{petar}For $\alpha>-1$ and $n\in \mathbb{N},$ we have
	$$\int_{\mathbb{D}}|f^{(n)}(z)|^2d\mu_{n,\alpha}(z)\le  ({\alpha+1})\int_{\mathbb{D}}|f(z)|^2 (1-|z|^2)^{{\alpha}}\frac{dxdy}{\pi}$$
	and the analogous with $d\nu_{n,\alpha}$ in the place of $d\mu_{n,\alpha}$.
\end{lemma}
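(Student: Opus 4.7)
The plan is to expand $f(z)=\sum_{k\ge 0}a_k z^k$, so that $f^{(n)}(z)=\sum_{k\ge n}\frac{k!}{(k-n)!}a_k z^{k-n}$. Because every weight appearing is radial, Parseval on each circle $|z|=r$ kills the cross terms and each inequality decomposes into $\sum_k \lambda_k^{\mathrm{LHS}} |a_k|^2 \le \sum_k \lambda_k^{\mathrm{RHS}} |a_k|^2$. It then suffices to verify $\lambda_k^{\mathrm{LHS}}\le\lambda_k^{\mathrm{RHS}}$ term by term for every $k\ge n$.

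For the $d\nu_{n,\alpha}$ version, both $\lambda_k$ are pure Beta integrals. After simplification using $(\alpha-1)\Gamma(\alpha-1)=\Gamma(\alpha)$ and $(\alpha+2n-1)\Gamma(\alpha+2n-1)=\Gamma(\alpha+2n)$, the term-wise claim reduces to
\[
k(k-1)\cdots(k-n+1)\ \le\ (k+\alpha)(k+\alpha+1)\cdots(k+\alpha+n-1).
\]
Pairing $k-j$ on the left with $k+\alpha+j$ on the right for $j=0,\ldots,n-1$, each left factor is positive (since $k\ge n$) and strictly dominated by the matching right factor (since $\alpha>0$), so the product inequality follows.

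For the $d\mu_{n,\alpha}$ version the hypergeometric factor in the denominator of the density obstructs direct Beta evaluation; the term-wise inequality becomes a bound for
\[
J_{k,n,\alpha}\ :=\ \int_0^1 \frac{t^{k-n}(1-t)^{\alpha+2n-2}}{F(1-\alpha-n,-n;1;t)}\,dt.
\]
The naive estimate $F(1-\alpha-n,-n;1;t)\ge 1$ yields the desired inequality only for $k=n$, which is the near-extremal case; for $k>n$ a refined estimate is needed, and this is the main obstacle. My intended route is to expand $1/F(1-\alpha-n,-n;1;t)=\sum_j c_j(1-t)^j$ as a series in $1-t$. The polynomial $F(1-\alpha-n,-n;1;t)$ has degree $n$ with all positive Taylor coefficients (hence no positive roots) and, being a Jacobi-type polynomial, has only real roots, so its zeros lie in $(-\infty,0)$; equivalently, the zeros of $F(1-\alpha-n,-n;1;1-s)$ as a polynomial in $s$ lie in $(1,\infty)$, and a partial-fraction plus geometric-series calculation forces $c_j\ge 0$. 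Substituting back gives $J_{k,n,\alpha}=\sum_j c_j B(k-n+1,\alpha+2n-1+j)$, and the required inequality should collapse to a summation identity accessible through a contiguous relation for $F$; verifying this identity is where I expect the core technical work to lie.
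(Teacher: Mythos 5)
Your reduction is the same one the paper uses: expand $f=\sum a_kz^k$, use radiality and Parseval to diagonalize, and compare coefficients of $|a_k|^2$ for each $k\ge n$. For the $d\nu_{n,\alpha}$ half this works completely: your termwise inequality $k(k-1)\cdots(k-n+1)\le(k+\alpha)\cdots(k+\alpha+n-1)$ with the factor-by-factor pairing is correct and, if anything, more direct than the paper's argument (which shows the ratio $c_{k+1,n}/c_{k,n}\ge 1$ and that $c_{k,n}\to 1$). Your observation that the crude bound $F(1-\alpha-n,-n;1;t)\ge 1$ gives exactly equality at $k=n$ is also correct and correctly identifies why the $d\mu_{n,\alpha}$ case is delicate.

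The gap is that for $d\mu_{n,\alpha}$ you stop precisely at the step that constitutes essentially all of the work. Your expansion $1/F=\sum_j c_j(1-t)^j$ is legitimate (indeed $c_j=P(1)^{-1}h_j(t_1,\dots,t_n)\ge 0$, the complete homogeneous symmetric polynomial in $t_j=\beta_j/(1+\beta_j)\in(0,1)$, which follows most cleanly from the product form $\prod_i(1-t_i(1-t))^{-1}$ rather than from partial fractions, since the partial-fraction coefficients $A_j$ alternate in sign). But what you then need is not a ``summation identity'' reachable by a contiguous relation: after substituting $J=\sum_j c_jB(k-n+1,\alpha+2n-1+j)$ the required statement is the genuine \emph{inequality} $h_l(t_1,\dots,t_n)\le\frac{(n+l-1)!\,(\alpha+n-1)_l}{l!\,(n-1)!\,(\alpha+2n-1)_l}$ for all $l\ge 0$, where the $t_j$ are the roots of $F(-n,n+\alpha;1;1-t)$ known only through their elementary symmetric functions (Vieta). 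The paper devotes its entire Section 3 to exactly this, and even then proves it only for $n\le 3$ by ad hoc double inductions, explicitly stating it could not handle $n\ge 4$ (which is why the $\mu$-part of the main theorem is restricted to $1\le n\le 3$). So your proposal is not a proof of the $\mu$-half of the lemma; it is a correct reduction to the same hard inequality the authors face, with the resolution left as a hope rather than an argument.
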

We postpone the proof of this lemma for the next section.
\begin{lemma}\label{petarce:)}
For $n\ge 0$ and $\alpha>-1,$ we have $$\Delta \log g(|z|)\le 4 (n+1)(n+2+\alpha)(1-|z|^{-2})^{-2}.$$
\end{lemma}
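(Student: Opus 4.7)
The plan is to reduce the inequality to a one-variable statement about $F(t) := F(1-\alpha-n,\,-n;\,1;\,t)$, then exploit the Gauss hypergeometric ODE together with a lucky factorization.

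Setting $t = |z|^2$, I would first decompose
\[
\log g(|z|) = C - (\alpha+2n)\log(1-t) + \log F(t),
\]
with $C = \log\bigl((\alpha)_n\, n!\bigr)$. For a smooth radial function $u(z) = h(|z|^2)$ one has $\Delta u = 4\bigl(t h'(t)\bigr)'$; combined with $\Delta[-\log(1-|z|^2)] = 4(1-|z|^2)^{-2}$ this yields
\[
\Delta \log g = \frac{4(\alpha+2n)}{(1-t)^2} + 4\bigl(t F'(t)/F(t)\bigr)'.
\]
Since $(n+1)(n+\alpha) - (\alpha+2n) = n(n+\alpha-1)$, the assertion reduces to the scalar inequality
\[
\bigl(t F'(t)/F(t)\bigr)' \le \frac{n(n+\alpha-1)}{(1-t)^2}, \qquad t\in[0,1).
\]

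Next, I would invoke the Gauss hypergeometric ODE for $F$, which here takes the form
\[
t(1-t) F'' + \bigl[1 - (2-\alpha-2n)\,t\bigr] F' - n(n+\alpha-1)\,F = 0.
\]
Writing $w := F'/F$ and $u := \log F$, the elementary identity $(tu')' = w + tF''/F - tw^2$ lets one substitute the ODE to eliminate $F''$. After collecting terms, I expect to arrive at the clean identity
\[
\bigl(tF'/F\bigr)' - \frac{n(n+\alpha-1)}{(1-t)^2} = -\,t\left(w + \frac{n}{1-t}\right)\!\left(w + \frac{n+\alpha-1}{1-t}\right),
\]
the key point being that the quadratic in $w$ that emerges has discriminant $(\alpha-1)^2$ and factors with roots $-n/(1-t)$ and $-(n+\alpha-1)/(1-t)$.

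To finish, I would verify that both factors on the right are nonnegative. Since $-n$ is a nonpositive integer, $F$ is a polynomial of degree $n$, and a short inspection of the coefficients of $F(t)=\sum_k \frac{(1-\alpha-n)_k(-n)_k}{(1)_k\,k!}t^k$ shows that for $\alpha>1$ every coefficient is strictly positive; hence $F>0$ and $F'\ge 0$ on $[0,1)$. Together with $n\ge 0$ and $n+\alpha-1>0$ this makes each factor nonnegative, so the right-hand side is $\le 0$, proving the lemma. The main obstacle is the algebraic reduction that yields the factorization: a priori there is no reason for the quadratic's roots to line up with such simple rational expressions, and one has to carry out the ODE substitution carefully to notice that they do; the rest is bookkeeping and a one-line positivity check.
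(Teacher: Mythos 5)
Your proposal is correct --- I checked the key identity and it does hold: with $w=F'/F$ and the ODE $t(1-t)F''+[1-(2-\alpha-2n)t]F'-n(n+\alpha-1)F=0$ one gets exactly
\[
\bigl(tF'/F\bigr)'-\frac{n(n+\alpha-1)}{(1-t)^2}=-t\Bigl(w+\frac{n}{1-t}\Bigr)\Bigl(w+\frac{n+\alpha-1}{1-t}\Bigr),
\]
since the discriminant $(\alpha+2n-1)^2-4n(n+\alpha-1)=(\alpha-1)^2$, and the positivity of the coefficients of the polynomial $F(1-\alpha-n,-n;1;t)$ for $\alpha>1$ gives $w\ge 0$, so the right-hand side is $\le 0$. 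Your route is genuinely different from the paper's. The paper works with the Euler-transformed function $F(n+1,n+\alpha;1;t)$, splits the target inequality into two separate estimates ($ (n+1)(n+\alpha)(1-t)^{-1}F^2\ge F'F$ and a bound on $(F'/F)'$), reduces the first to a termwise comparison $F(-n,1-\alpha-n;1;t)\ge F(-n,1-\alpha-n;2;t)$, and for the second invokes external monotonicity results of Biernacki--Krzy\.z and Heikkala--Vamanamurthy--Vuorinen on the ratio $F(-n,1-\alpha-n;2;t)/F(-n,1-\alpha-n;1;t)$. Your argument is self-contained (no appeal to those monotonicity theorems), yields an exact identity rather than a sum of two one-sided estimates, and in particular identifies precisely where equality can occur (only where $t=0$ or $w$ hits a root, which for $n\ge1$ it does not on $(0,1)$); the paper's splitting, on the other hand, requires no knowledge of the hypergeometric ODE and transfers more directly to situations where $F$ is not annihilated by a second-order operator. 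One cosmetic remark: the exponent $(1-|z|^{-2})^{-2}$ in the statement is a typo for $(1-|z|^2)^{-2}$, which is what both you and the paper actually prove.
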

\begin{proof}
Let us denote $F(t)=F(n+1,n+2+\alpha;1;t)$. From the formula for laplacian in polar coordinates we find
\[
\begin{split}
\Delta \log g(\rho)&=\Delta \log F(\rho^2)=\frac{1}{\rho}\frac{\partial}{\partial \rho}\log F(\rho^2)+\frac{\partial^2}{\partial \rho^2}\log F(\rho^2)\\
&=\frac{4F'(\rho^2)F(\rho^2)+4\rho^2F(\rho^2)F''(\rho^2)-4\rho^2(F'(\rho^2))^2}{F^2(\rho^2)}.
\end{split}
\]
Hence the inequality we intend to prove is equivalent to
$$F(t)F'(t)+tF(t)F''(t)-t(F'(t))^2\leq (n+1)(n+2+\alpha)(1-t)^{-2}F^2(t).$$
This inequality follows from the next two inequalities:
$$(n+1)(n+2+\alpha)(1-t)^{-1}(F(t))^2\geq F'(t)F(t)$$
and
$$(n+1)(n+2+\alpha)((1-t)^{-2}-(1-t)^{-1})(F(t))^2\geq t(F(t)F''(t)-(F'(t))^2).$$
First of them, after using $F(a,b;c;t)=(1-t)^{c-a-b}F(c-a,c-b;c;t),$ is easily seen to be equivalent with $F(-n,-1-{\alpha}-n;1;t)\geq F(-n,-1-{\alpha}-n;2;t).$ The second inequality reduces to
$$(n+1)(n+2+\alpha)(1-t)^{-2}\geq \bigg(\frac{F'}{F}\bigg)'(t),$$
or, by using the same identity:
\[
\begin{split}
&\quad(1-t)^{-2}\geq \bigg((1-t)^{-1}\frac{F(-n,-1-{\alpha}-n;2;t)}{F(-n,-1-{\alpha}-n;1;t)}\bigg)'\\
&=(1-t)^{-2}\frac{F(-n,-1-{\alpha}-n;2;t)}{F(-n,-1-{\alpha}-n;1;t)}+(1-t)^{-1}\bigg(\frac{F(-n,-1-{\alpha}-n;2;t)}{F(-n,-1-{\alpha}-n;1;t)}\bigg)'.
\end{split}
\]
However, main results of the papers \cite{BIERNACKIKRZYZ, HVV} gives that $\frac{F(-n,-1-{\alpha}-n;2;t)}{F(-n,-1-{\alpha}-n;1;t)}$ is monotone decreasing and $\leq 1,$ which concludes the proof.
\end{proof}

Now, we turn to the proof of main theorem. Let $\|f\|_{\mathcal{A}_{\alpha}}=1.$ First we define
$$u_n(z) = \frac{|f^{(n)}(z)|^2}{\tilde{g}(|z|)}$$
with $\tilde{g}(|z|)=\frac{(\alpha+1) n!({{\alpha+2}})_nF(n+1,n+2+\alpha;1;|z|^2)}{\alpha+2n+1}$ and also
$$I_n(s) =  \int_{\{z: u_n(z)> u_n^\ast(s)\}} u_n(z) d\mu(z),$$ where $u_n^\ast(s)$ is the unique positive real number $t$ so that $\mu(\{z: u_n(z)>t\})=s$.

Observe that \begin{equation}\label{izero}I_n(0)=\int_{\{z: u_n(z)> u_n^\ast(0)\}} u_n(z) d\mu(z)=0,\end{equation} and \begin{equation}\label{iinf}\begin{split}I_n(+\infty)&=\int_{\{z: u_n(z)> u_n^\ast(+\infty)\}} u_n(z) d\mu(z)\\&=\int_{\{z: u_n(z)> 0\}} u_n(z) d\mu(z)=\int_{\mathbb{D}}u_n(z) d\mu(z)\le 1\end{split}.\end{equation}
Note that the last inequality follows from Lemma 2.1.

In a similar way as in \cite{ramos} we can prove
\begin{lemma}\label{thm:lemma-derivatives} The function $\varrho(t) := \mu(\{z: u_n(z) > t\})$ is absolutely continuous on $(0,\max u_n],$ and
	\[
	-\varrho'(t) = \int_{\{z: u_n(z) = t\}} |\nabla u_n|^{-1} (1-|z|^2)^{-2} \, d \mathcal{H}^1.
	\]
	In particular, the function $u_n^*$ is, as the inverse of $\varrho,$ locally absolutely continuous on $[0,+\infty),$ with
	\[
	-(u_n^*)'(s) = \left( \int_{\{z: u_n(z)=u_n^*(s)\}} |\nabla u_n|^{-1} (1-|z|^2)^{-2} \, d \mathcal{H}^1 \right)^{-1}.
	\]
\end{lemma}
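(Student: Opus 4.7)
The plan is to derive both identities from the co-area formula applied to the real-analytic function $u_n = |f^{(n)}|^2/g(|z|)$, together with Sard's theorem to control the critical set. First I would note that, since $f^{(n)}$ is holomorphic (and we may assume $f^{(n)} \not\equiv 0$, otherwise the statement is trivial), $u_n$ is a non-negative real-analytic function on $\mathbb{D}$, so that $\{\nabla u_n = 0\}$ is a real-analytic subvariety of $\mathbb{D}$. By Sard's theorem, the set of critical values has one-dimensional Lebesgue measure zero, so $\{u_n = t\}$ is, for a.e. $t \in (0, \max u_n]$, a smooth one-dimensional submanifold of $\mathbb{D}$ on which $|\nabla u_n|$ does not vanish.

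Next I would apply the Federer co-area formula to the weight $\phi(z) = (1-|z|^2)^{-2} \chi_{\{u_n > t\}}$: for any Lipschitz truncation of $u_n$ on a compact subset of $\mathbb{D}$,
\begin{equation*}
\int_{\{u_n > t\}} (1-|z|^2)^{-2}\, dA(z) = \int_t^{\max u_n} \left( \int_{\{u_n = \tau\}} |\nabla u_n|^{-1} (1-|z|^2)^{-2}\, d\mathcal{H}^1 \right) d\tau.
\end{equation*}
The left-hand side equals $\varrho(t) = \mu(\{u_n > t\})$. Since the right-hand side is the integral of a non-negative measurable function from $t$ to $\max u_n$, $\varrho$ is absolutely continuous on $(0, \max u_n]$, and the fundamental theorem of calculus yields the claimed identity
\begin{equation*}
-\varrho'(t) = \int_{\{u_n = t\}} |\nabla u_n|^{-1} (1-|z|^2)^{-2}\, d\mathcal{H}^1,
\end{equation*}
for almost every $t$. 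Continuity of the distribution function (which follows from the fact that $u_n$ has no level set of positive two-dimensional measure, a consequence of real-analyticity) guarantees that $\varrho$ is strictly decreasing on the range of values that matter, so it has a well-defined inverse $u_n^*$.

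Finally, I would transfer the result to $u_n^*$ via the standard inverse-function argument for absolutely continuous monotone functions. Since $\varrho$ is absolutely continuous and strictly decreasing on $(0, \max u_n]$, its inverse $u_n^*$ is locally absolutely continuous on $[0, +\infty)$; applying the chain rule to $\varrho(u_n^*(s)) = s$ on the full-measure subset where both $\varrho$ is differentiable with nonzero derivative and $u_n^*$ is differentiable, one obtains $\varrho'(u_n^*(s))\,(u_n^*)'(s) = 1$, hence the stated reciprocal formula for $-(u_n^*)'(s)$.

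The main obstacle I anticipate is the rigorous justification of the co-area formula all the way up to $\partial\mathbb{D}$, since the weight $(1-|z|^2)^{-2}$ blows up there: one must exhaust $\mathbb{D}$ by compact subdisks $\{|z| \le r\}$ and pass to the limit, using the fact that the total mass $\mu(\{u_n > t\})$ is finite for every $t > 0$ (which in turn follows from $\int_{\mathbb{D}} u_n\, d\mu \le 1$, cf.\ \eqref{iinf}) so that Chebyshev gives integrability of the level-set integrals near the boundary. Once this technical limiting procedure is in place — as was done by Ramos and Tilli in \cite{ramos} — the remainder of the argument is a routine application of co-area and Sard.
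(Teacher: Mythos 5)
The paper gives no proof of this lemma (it simply defers to \cite{ramos}), and your outline reproduces exactly that argument: the coarea formula together with Sard's theorem for the real-analytic function $u_n$, the exhaustion of $\mathbb{D}$ by compact subdisks to handle the blow-up of $(1-|z|^2)^{-2}$, and the inverse-function argument for absolutely continuous monotone functions. The only step worth making explicit is that the local absolute continuity of $u_n^*$ requires $-\varrho'(t)>0$ for a.e.\ $t\in(0,\max u_n)$, not merely continuity and strict monotonicity of $\varrho$ (a strictly monotone AC function whose derivative vanishes on a suitable set can have a non-AC inverse); here this positivity holds because for a.e.\ $t$ the level set $\{u_n=t\}$ is a nonempty regular curve, so the level-set integral defining $-\varrho'(t)$ is strictly positive.
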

Let us then denote the boundary of the superlevel set where $u > u^*(s)$ as
\[
A_s=\partial\{z:u_n(z)>u_n^*(s)\}.
\]
We then imitate the corresponding proof in \cite{ramos}. By Lemma \ref{thm:lemma-derivatives},
\[
I_n'(s)=u_n^*(s),\quad
I_n''(s)=-\left(\int_{A_s} |\nabla u_n|^{-1}(1-|z|^2)^{-2}\,d{\mathcal H}^1\right)^{-1}.
\]
Now we apply Cauchy-Schwarz inequality to get
\[
\left(\int_{A_s} |\nabla u_n|^{-1}(1-|z|^2)^{-2}\,d{\mathcal H}^1\right)
\left(\int_{A_s} |\nabla u_n|\,d{\mathcal H}^1\right)
\geq
\left(\int_{A_s} (1-|z|^2)^{-1}\,d{\mathcal H}^1\right)^2,
\]
letting
\[
L(A_s):= \int_{A_s} (1-|z|^2)^{-1}\,d{\mathcal H}^1
\]
denote the length of $A_s$ in the hyperbolic metric, we obtain the lower bound
\begin{equation}\label{eq:lower-bound-second-derivative}
I_n''(s)\geq - \left(\int_{A_s} |\nabla u_n|\,d{\mathcal H}^1\right)
L(A_s)^{-2}.
\end{equation}
To determine the first term in the product on the right-hand side of \eqref{eq:lower-bound-second-derivative}, we observe that
\[\begin{split}\Delta \log u_n(z) &=\Delta \log \frac{|f^{(n)}(z)|^2}{\tilde{g}(|z|)}\\&=\Delta \log {|f^{(n)}(z)|^2 }-\Delta\log \tilde{g}(|z|)\\&\geq-4(n+1)(n+2+\alpha)(1-|z|^2)^{-2},
\end{split}\]
which then implies that, letting $w(z)=\log u_n(z)$,
\[\begin{split}
\frac {-1} {u_n^*(s)} \int_{A_s} |\nabla u_n|\,d{\mathcal H}^1
 &=\int_{A_s} \nabla w\cdot\nu \,d{\mathcal H}^1
\\&=\int_{\{z: u_n(z)>u_n^*(s)\}} \Delta w\,dA(z)
\\&\ge-4 (n+1)(n+2+\alpha)\int_{\{z: u_n(z)>u_n^*(s)\}}
(1-|z|^2)^{-2}\,dA(z)
\\& =-4 (n+1)(n+2+\alpha) \mu(\{z: u_n(z)>u_n^*(s)\})\\&=
-4 (n+1)(n+2+\alpha)s.
\end{split}
\]
Therefore,
\begin{equation}\label{eq:lower-bound-second-almost}
I_n''(s)\geq -4(n+1)(n+2+\alpha)s u_n^*(s)L(A_s)^{-2}=
-4(n+1) (n+2+\alpha)s I_n'(s)L(A_s)^{-2}.
\end{equation}
On the other hand, the isoperimetric inequality for the hyperbolic metric gives
\[
L(A_s)^2 \geq 4\pi s + 4 s^2,
\]
so that, plugging into \eqref{eq:lower-bound-second-almost}, we obtain
\begin{equation}\begin{split}\label{eq:final-lower-bound-second}
I_n''(s)&\geq -4 (n+1)(n+2+\alpha)s I_n'(s)(4\pi s+4 s^2)^{-1}
\\&=-(n+1)(n+2+\alpha) I_n'(s)(\pi+s)^{-1}.\end{split}
\end{equation}

Then we obtain $$I_n''(s)  +(n+1) (n+2+\alpha) I_n'(s)(\pi+s)^{-1}\ge 0.$$ Thus $$J(t) =  I_n(T(t)),$$ is convex, where $$T(t) = -\pi +\pi\left(1-t\right)^{\frac{1}{1-(n+1)(n+2+\alpha)}}.$$

Since $T(0)=0$ and $T(1)=+\infty$, it follows that $J(0)=0$ and $J(1)\le 1$. Thus $J(s)\le s$. In other words,
$$I_n(s) \le  \theta_n(s)$$ where
$$\theta_n(s) = 1-(1+s/\pi)^{1-X}=T^{-1}(s)$$ is the inverse function of $T$ with $X=(n+1)(n+2+\alpha).$

%Let us briefly describe the proof in the case of measure $d_{\nu_{n,\alpha}}.$
In the case of measure $d\nu_{n,\alpha},$ we define $u_n,$ as above, but with $$g(|z|)=\frac{ \Gamma(\alpha+1+2n)}{(\alpha+1)\Gamma(\alpha+2)}(1-|z|^2)^{-\alpha-2n-2}.$$ Proceeding as earlier, we have
$$\Delta \log u_n(z)\geq -4(2n+2+\alpha)(1-|z|^2)^{-2},$$
thus getting
$$ \frac {-1} {u_n^*(s)} \int_{A_s} |\nabla u_n|\,d{\mathcal H}^1\geq -4(2n+2+\alpha)s$$
and
$$I_n''(s)\geq -4(2n+2+\alpha)sI_n'(s)L(A_s)^{-2},$$
while the isoperimetric inequality for the hyperbolic metric implies
$$I_n''(s)\geq -(2n+2+\alpha)I_n'(s)(\pi+s)^{-1}.$$
Now it is evident that our estimate holds with $X=2n+2+\alpha.$

\section{A proof of Lemma 2.1}

Let us first prove the Lemma 2.1 in the case of the measure $d\nu_{n,\alpha},$ since its proof is significantly easier.

Using Taylor expansion and Parseval's identity we get the following equivalent form:
\[
\begin{split}
&\sum_{k=n}^{+\infty}\frac{k^2(k-1)^2\cdots(k-n+1)^2|a_k|^2\Gamma({{\alpha+2}})}{\Gamma({\alpha+1}+2n)}\int_{0}^{1}2\rho^{2k-2n+1}(1-\rho^2)^{{{\alpha}}+2n} d\rho\\
&\leq \sum_{k=0}^{+\infty}\frac{k!\Gamma({{\alpha+2}})}{\Gamma(k+{{\alpha+2}})}|a_k|^2.
\end{split}
\]
This is easily seen to be equivalent with
$$c_{k,n}=\frac{k!}{(k-n)!}\frac{\Gamma(k+{{\alpha+2}})}{\Gamma(k+n+2+\alpha)}\leq 1,$$
which follows from the observations that $\frac{c_{k+1,n}}{c_{k,n}}=\frac{(k+1)(k+{{\alpha+2}})}{(k-n+1)(k+n+2+\alpha)}\geq 1$ and $\lim_{k\rightarrow +\infty}c_{k,n}=1.$

The case of the measure $d\mu_{n,\alpha}$ is much harder and we will prove it only for $n\leq 3.$

Similarly as above, we reduce it to the inequality:
\[
\begin{split}
&\quad\sum_{k=n}^{+\infty}\frac{k^2(k-1)^2\cdots(k-n+1)^2|a_k|^2}{n!({{\alpha+2}})_n}\int_{0}^{1}\frac{2\rho^{2k-2n+1}(1-\rho^2)^{{{\alpha}}+2n}}{F(-1-{\alpha}-n,-n;1;\rho^2)} d\rho\\
&=\sum_{k=n}^{+\infty}\frac{k^2(k-1)^2\cdots(k-n+1)^2|a_k|^2}{n!({{\alpha+2}})_n}\int_{0}^{1}\frac{t^{k-n}(1-t)^{{{\alpha}}+2n}}{F(-1-{\alpha}-n,-n;1;t)} d t\\
&\leq \frac{1}{{\alpha+1}+2n}\sum_{k=0}^{+\infty}\frac{k!\Gamma({{\alpha+2}})}{\Gamma(k+{{\alpha+2}})}|a_k|^2.
\end{split}
\]
Hence, it is sufficient (and necessary) to prove that
\[
\begin{split}
\label{koeficijenti}
&\frac{k^2(k-1)^2\cdots(k-n+1)^2}{n!({{\alpha+2}})_n}\int_{0}^{1}\frac{t^{k-n}(1-t)^{{\alpha}+2n}}{F(-1-\alpha-n,-n;1;t)} d t\\
&\leq \frac{1}{{\alpha+1}+2n}\frac{k!\Gamma({\alpha+2})}{\Gamma(k+{\alpha+2})} \quad \text{for} \quad k\geq n.
\end{split}
\]
Denoting $P(t)=F(-1-\alpha-n,-n;1;t)$ and reformulating our inequality we stand at
\[
\begin{split}
&\frac{1}{B(k-n+1,{\alpha+1}+2n)}\int_{0}^{1}t^{k-n}(1-t)^{{\alpha}+2n}P(t)^{-1}dt\\
&\leq \frac{n!\Gamma(k+{\alpha+2}+n)\Gamma({\alpha+2}+n)}{\Gamma({\alpha+2}+2n)\Gamma(k+{{\alpha+2}})k(k-1)\cdots(k-n+1)}
\end{split}
\]
We will rewrite $\frac{1}{P(t)}$ using partial fractions as $$\frac{1}{P(t)}=\frac{1}{(1+\beta_1 t)(1+\beta_2 t)\cdots(1+\beta_n t)}=\frac{A_1}{1+\beta_1t}+\frac{A_2}{1+\beta_2t}+\dots+\frac{A_n}{1+\beta_n t},$$
for some $A_j \in \mathbb{R},  j \in \{1,2,\dots,n\}$ and $\beta_j \in \mathbb{R}^{+},  j \in \{1,2,\dots,n\}.$
This is possible, since by the formula from \cite{ABRAMOWITZSTEGUN}, $P(t)=(1-t)^nP_n^{(0,{\alpha+1})}(\frac{1+t}{1-t}),$ where $P_n^{(0,{\alpha+1})}(x)$ are the corresponding Jacobi polynomials and each $P_n^{(0,{\alpha+1})}(x)$ has exactly $n$ simple real zeros (this is general theorem on orthogonal polynomials). Also, $P(t)$ has only negative zeros because all its coefficients are positive.
From
$$1=P(t)\sum_{j=1}^{n}\frac{A_j}{1+\beta_j t}= \sum_{j=1}^{n}A_j (1+\beta_1 t)\cdots(1+\beta_{j-1}t)(1+\beta_{j+1}t)\cdots(1+\beta_nt)                                      $$
taking the limit when $t\rightarrow-\frac{1}{\beta_j},$ we find that
$$A_j=\beta_j^{n-1}\prod_{i\neq j} (\beta_j-\beta_i)^{-1}.  $$
Expansion into partial fractions reduces to the calculation of the integral
$$\int_{0}^{1}t^{k-n}(1-t)^{{\alpha}+2n}(1+\gamma t)^{-1} dt $$
which after the substitution $z=\frac{(\gamma+1)t}{1+\gamma t}$ becomes
\[
\begin{split}
&(\gamma+1)^{-k+n-1}\int_{0}^{1}z^{k-n}(1-z)^{{\alpha}+2n}
\big(1-\frac{\gamma}{\gamma+1}z\big)^{-k-{\alpha-n-1}} dz\\
&=\frac{B(k-n+1,{\alpha+1}+2n)}{(\gamma+1)^{k-n+1}}F\big(k-n+1,n+k+{\alpha+1};k+{\alpha+2}+n;\frac{\gamma}{\gamma+1}\big),
\end{split}
\]
by the Euler integral representation for hypergeometric function.
%$$B(k-n+1,{\alpha+1}+2n)(\gamma+1)^{-k+n-1}(\gamma+1)^{{(\alpha+2)}+2n-2}$$
Hence, we have:
\[
\begin{split}
&\frac{\Gamma(k-n+1)\Gamma({\alpha+1}+2n)}{\Gamma(k+n+2+\alpha)(\gamma+1)^{k-n+1}}F\big(k-n+1,n+k+{\alpha+1};k+{\alpha+2}+n;\frac{\gamma}{\gamma+1}\big)\\
%&=B(k-n+1,{\alpha+1}+2n)(\gamma+1)^{-k+n-1}\sum_{l=0}^{+\infty}\frac{(n+k+{(\alpha+2)}+1)(k-n+1)_l}{(n+k+{(\alpha+2)}+l+1)l!}\bigg(\frac{\gamma}{\gamma+1}\bigg)^l\\
%&=(n+k+{(\alpha+2)}+1)B(k-n+1,{\alpha+1}+2n)(\gamma+1)^{-k+n-1}\int_{0}^{1}t^{k+{\alpha+2}+n-2}\big(1-\frac{\gamma}{\gamma+1}t\big)^{-k+n-1}dt\\
&=\frac{(n+k+{\alpha}+3)\Gamma(k-n+1)\Gamma({\alpha+1}+2n)}{\Gamma(k+n+2+\alpha)(\gamma+1)^{-2n-{\alpha}}}\int_{0}^{1}\frac{y^{k+{\alpha}+n}}{\big(1+\gamma y \big)^{1+\alpha+2n}}dy,
\end{split}
\]
where the last equality follows from the simple change of variable.
%substitution $y=\frac{\frac{t}{\gamma+1}}{1-\frac{\gamma}{\gamma+1}t}$ is used.
Now, the main inequality can be rewritten as
\[
\begin{split}
&\sum_{j=1}^{n}A_j(1+\beta_j)^{{\alpha}+2n}\int_{0}^{1}\frac{y^{k+{\alpha}+n}}{\big(1+\beta_j y\big)^{{\alpha+1}+2n}}dy\\
&\leq \frac{n!\Gamma(n+2+\alpha)}{\Gamma(2n+2+\alpha)}\frac{(k+{\alpha+2})_{n-1}}{(k-n+1)_n}.
\end{split}
\]

Denote $$\varphi(y)=\sum_{j=1}^{n}B_jy^{k-j}$$ and $$\psi(y)=\frac{y^{k+{\alpha}+n}}{\varphi(y)}\sum_{j=1}^{n}\frac{A_j(1+\beta_j)^{{\alpha}+2n}}{\big(1+\beta_j y\big)^{{\alpha+1}+2n}},$$
where $B_j$ is chosen so that
$$\int_{0}^{1}\varphi(y) d y=\sum_{j=1}^{n}\frac{B_j}{k-j+1}=\frac{(k+{\alpha+2})_{n-1}}{(k-n+1)_n}.$$
From
$$(x+{\alpha+2})_{n-1}=\sum_{j=1}^{n}\frac{B_j(x-n+1)_n}{x-j+1},$$
by letting $x \rightarrow j-1,$ we get
\[
\begin{split}
(j+{\alpha+1})_{n-1}&=B_j(j-1)(j-2)\cdots1\cdots(-1)(-2)\cdots(j-n)\\
&=(-1)^{n-j}(j-1)!(n-j)!B_j,
\end{split}
\]
i.e. $$B_j=(-1)^{n-j}\frac{(j+{\alpha+1})_{n-1}}{(j-1)!(n-j)!}.$$

Our inequality reads as
$$\int_{0}^{1}\varphi(y)\psi(y) dy \leq  \frac{n!\Gamma(n+2+\alpha)}{\Gamma(2n+2+\alpha)}\frac{(k+{\alpha+2})_{n-1}}{(k-n+1)_n},$$therefore, it will be enough to prove that
$$\psi(y)\leq \psi(1)=\frac{n!\Gamma(n+2+\alpha)}{\Gamma(2n+2+\alpha)}.$$
Let us explain the last equality. By their definitions, we have
$$\psi(1)=\frac{1}{\varphi(1)}\sum_{j=1}^{n}A_j(1+\beta_j)^{-1}=\frac{\sum_{j=1}^{n}\frac{A_j}{1+\beta_j}}{\sum_{j=1}^{n}B_j}$$
$$=\frac{1}{P(1)\sum_{j=1}^{n}B_j}=\frac{1}{F(-1-{\alpha}-n,-n;1;1)\sum_{j=1}^{n}B_j}.$$
From the identity $(x+{\alpha+2})_{n-1}=\sum_{j=1}^{n}\frac{B_j(x-n+1)_n}{x-j+1},$ we see that $\sum_{j=1}^{n}B_j$ is the coefficient with $x^{n-1},$ on the right-hand side, which from the left-hand side is easily seen to be equal to $1$. Using Gauss' theorem $F(a,b;c;1)=\frac{\Gamma(c-a-b)\Gamma(c)}{\Gamma(c-a)\Gamma(c-b)}$ we get $F(-{\alpha-1}-n,-n;1;1)=\frac{\Gamma(2n+2+\alpha)}{n!\Gamma(n+2+\alpha)},$ thus finding the value $\psi(1).$

After substitution $y=1-x,$ this becomes
$$\sum_{j=1}^{n}\frac{A_j}{1+\beta_j}\bigg(1-\frac{\beta_j}{\beta_j+1}x\bigg)^{-\alpha-1-2n}\leq\quad\frac{n!\Gamma(n+2+\alpha)}{\Gamma(2n+2+\alpha)}
\sum_{j=1}^{n}B_j(1-x)^{-{\alpha}-n-j}.$$
Series expansion on $x$ reduces the problem to proving
$$\binom{{\alpha}+2n+l}{l}\sum_{j=1}^{n}\frac{A_j}{1+\beta_j}\bigg(\frac{\beta_j}{1+\beta_j}\bigg)^l\leq \frac{n!\Gamma(n+2+\alpha)}{\Gamma(2n+2+\alpha)}\sum_{j=1}^{n}B_j\binom{{\alpha}+n+j+l-1}{l}.$$
The sum on right-hand side can be calculated:
\[
\begin{split}
&\quad\sum_{j=1}^{n}B_j\binom{{\alpha}+n+j+l-1}{l}\\
&=\sum_{j=0}^{n-1}(-1)^{n-j-1}\frac{(j+{\alpha+2})_{n-1}}{j!(n-j-1)!}\binom{{\alpha}+n+j+l}{l}\\
&=\frac{\Gamma({\alpha}+l+n+1)}{(n-1)!l!\Gamma({\alpha+2})}\sum_{j=0}^{n-1}(-1)^{n-j-1}\binom{n-1}{j}\frac{({\alpha+1}+n+l)_j}{({\alpha+2})_j}\\
&=\frac{(n+l-1)!\Gamma({\alpha}+l+n+1)}{(l!)^2(n-1)!\Gamma({\alpha+2}+n-1)},
\end{split}
\]
using the next lemma:
\begin{lemma}
	$$\sum_{s=0}^{m}\frac{(-m)_s(\beta+l+m)_s}{(\beta)_{s}s!}=(-1)^m\frac{(l+1)_m}{(\beta)_m}.$$
\end{lemma}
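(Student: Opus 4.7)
The plan is to recognize the left-hand side as a terminating Gauss hypergeometric series evaluated at $z=1$ and then apply the Chu--Vandermonde summation identity. Indeed, by the very definition of ${}_2F_1$,
$$\sum_{s=0}^{m}\frac{(-m)_s(\beta+l+m)_s}{(\beta)_s\,s!} = {}_2F_1(-m,\beta+l+m;\beta;1),$$
the series terminating automatically because $(-m)_s=0$ for $s>m$.

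Next I would invoke the Chu--Vandermonde identity, which asserts that for every nonnegative integer $m$ and every admissible $b,c$,
$${}_2F_1(-m,b;c;1)=\frac{(c-b)_m}{(c)_m}.$$
Setting $b=\beta+l+m$ and $c=\beta$ turns the right-hand side into
$$\frac{(\beta-\beta-l-m)_m}{(\beta)_m}=\frac{(-l-m)_m}{(\beta)_m}.$$

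Finally I would simplify the numerator: by the definition of the Pochhammer symbol,
$$(-l-m)_m=(-l-m)(-l-m+1)\cdots(-l-1)=(-1)^m(l+1)(l+2)\cdots(l+m)=(-1)^m(l+1)_m,$$
which produces exactly $(-1)^m(l+1)_m/(\beta)_m$ on the right, as claimed.

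I do not anticipate any genuine obstacle here; the only thing to check is that Chu--Vandermonde is applicable, which it is because the upper parameter $-m$ is a nonpositive integer, so convergence at $z=1$ is not an issue. If one prefers a self-contained argument avoiding an explicit appeal to Chu--Vandermonde, an easy induction on $m$ (using the Pochhammer recurrence $(\beta+l+m)_{s}=(\beta+l+m)_{s-1}(\beta+l+m+s-1)$ and reindexing) yields the same result, but the hypergeometric route is cleaner and is the natural way to present it inside the surrounding computation.
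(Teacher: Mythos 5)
Your proof is correct. It is, however, organized differently from the paper's own argument, and the difference is worth noting. You identify the sum as the terminating series ${}_2F_1(-m,\beta+l+m;\beta;1)$ and apply the Chu--Vandermonde identity ${}_2F_1(-m,b;c;1)=(c-b)_m/(c)_m$ directly, then simplify $(-l-m)_m=(-1)^m(l+1)_m$; this is a one-step evaluation valid identically in $l$ and $\beta$ (subject only to $(\beta)_s\neq 0$ for $s\le m$, which is already needed for the statement to make sense). The paper instead observes that both sides are polynomials of degree $m$ in $l$, so it suffices to verify the identity at $m+1$ distinct values; it then specializes to $l=-t$ with $t>m$ and applies the non-terminating Gauss summation theorem in its Gamma-function form, obtaining $\Gamma(t)/(\Gamma(t-m)(\beta)_m)$ on both sides. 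In effect the paper re-derives the terminating Chu--Vandermonde identity from Gauss's theorem by polynomial interpolation (the detour is needed because the Gamma-function form of Gauss's theorem degenerates at nonnegative integer $l$, where $\Gamma(-l)$ is undefined), whereas you invoke the terminating form directly and bypass the interpolation step. Both routes are sound; yours is shorter and avoids the parameter restrictions, while the paper's is self-contained modulo Gauss's theorem and does not presuppose the reader knows the terminating variant.
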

\begin{proof}
	Note that both sides of the identity are $m-$degree polynomials on $l.$ Hence, it will be enough to prove it for $m+1$ different numbers $l.$ Taking $l=-t, $ for $t>m,$ we get that the LHS, by the Gauss theorem, is equal to $\frac{\Gamma(t)}{\Gamma(t-m)(\beta)_m},$ while the RHS is $(-1)^m\frac{(1-t)_m}{(\beta)_m}.$ Since they are evidently equal, we conclude the proof.
\end{proof}
Therefore, we need to prove
\begin{equation}
\sum_{j=0}^{n}A_j(1-t_j)(1-t_jt)^{-\alpha-2n-1}\leq F(n,2n+\alpha+1;1;t),
\end{equation}
where $t_j=\frac{\beta_j}{\beta_j+1}.$
One possible approach would be the usage of some properties of Hadamard products of series.

However, we will compare the appropriate coefficients on both sides of the previous inequality:
$$\sum_{j=1}^{n}A_j(1-t_j)t_j^l\leq \frac{n(n+\alpha+1)(n+l-1)!\Gamma(\alpha+l+n+1)}{l!(\alpha+1+2n)\Gamma(\alpha+2n+l+1)}.$$
We will now express the left-hand side in another way. In fact, we calculate the $l$-th derivative of $\frac{1}{P(t)}$ using two approaches. First, from
$$ \frac{1}{P(t)}=\sum_{j=1}^{n}\frac{A_j}{1+\beta_j t}$$
we find
$$  \bigg(\frac{1}{P(t)}\bigg)^{(l)}=(-1)^ll!\sum_{j=1}^{n}\frac{A_j}{1+\beta_j t}\bigg(\frac{\beta_j}{1+\beta_j t}\bigg)^l$$
and
$$\bigg(\frac{1}{P(t)}\bigg)^{(l)}\big|_{t=1}=(-1)^ll!\sum_{j=1}^{n}\frac{A_j}{1+\beta_j }\bigg(\frac{\beta_j}{1+\beta_j }\bigg)^l.$$
From this formula, we see the following equivalent form of our inequality:
$$(-1)^l\bigg(\frac{1}{P(t)}\bigg)^{(l)}\big|_{t=1}\leq \frac{n(n+{\alpha+1})(n+l-1)!\Gamma(\alpha+l+n+1)}{(\alpha+1+2n)\Gamma({\alpha}+2n+l+1)}. $$
%This can be further expressed via Faa di Bruno's formula and Gauss theorem. Or, by differentiating the product $P(t)\cdot\frac{1}{P(t)}$:
%$$(-1)^l\bigg(\frac{1}{P(t)}\bigg)^{(l)}(1)=\sum_{k=0}^{l-1}(-1)^{l-k-1}\frac{P^{(l-k)}(1)}{(l-k)!}\frac{(-1)^k\big(\frac{1}{P(t)}\big)^{(k)}(1)}{k!}$$
%$$=\sum_{k=0}^{l-1}(-1)^{l-k-1}\frac{(-1-{\alpha}-n)_{l-k}(-n)_{l-k}\Gamma(2n+2+\alpha+k-l)}{n!(l-k)!\Gamma(n+2+\alpha)}\frac{(-1)^k\big(\frac{1}{P(t)}\big)^{(k)}(1)}{k!}$$
%$$=\sum_{k=\max\{0,l-n\}}^{l-1}(-1)^{l-k-1}\frac{(-1-{\alpha}-n)_{l-k}(-n)_{l-k}\Gamma(2n+2+\alpha+k-l)}{n!(l-k)!\Gamma(n+2+\alpha)}\frac{(-1)^k\big(\frac{1}{P(t)}\big)^{(k)}(1)}{k!}$$
From Leibnitz's formula we get:
\[
\begin{split}
&\bigg(\frac{1}{P(t)}\bigg)^{(l)}= \bigg(\frac{1}{(1+\beta_1t)(1+\beta_2t)\cdots(1+\beta_nt)}\bigg)^{(l)}\\
&=\sum_{\substack{j_1+j_2+\dots+j_n=l; \\ j_1,j_2,\dots,j_n\geq0}}\binom{l}{j_1;j_2;\dots;j_n} \bigg(\frac{1}{1+\beta_1t}\bigg)^{(j_1)}\bigg(\frac{1}{1+\beta_2t}\bigg)^{(j_2)}\cdots\bigg(\frac{1}{1+\beta_nt}\bigg)^{(j_n)}\\
&=\sum_{\substack{j_1+j_2+\dots+j_n=l; \\ j_1,j_2,\dots,j_n\geq0}}\binom{l}{j_1;j_2;\dots;j_n}\frac{(-1)^{j_1}j_1!\beta_1^{j_1}}{(1+\beta_1t)^{j_1+1}}\frac{(-1)^{j_2}j_2!\beta_2^{j_2}}{(1+\beta_2t)^{j_2+1}}\cdots\frac{(-1)^{j_n}j_n!\beta_n^{j_n}}{(1+\beta_nt)^{j_n+1}}\\
&=\frac{(-1)^ll!}{P(t)}\sum_{\substack{j_1+j_2+\dots+j_n=l; \\ j_1,j_2,\dots,j_n\geq0}}\frac{\beta_1^{j_1}\beta_2^{j_2}\cdots\beta_n^{j_n}}{(1+\beta_1t)^{j_1}(1+\beta_2t)^{j_2}\cdots(1+\beta_nt)^{j_n}},
\end{split}
\]
which gives
$$\sum_{j=1}^{n}A_j(1-t_j)t_j^l=\frac{1}{P(1)}\sum_{\substack{j_1+j_2+\dots+j_n=l; \\ j_1,j_2,\dots,j_n\geq0}}t_1^{j_1}t_2^{j_2}\cdots t_n^{j_n}.$$
Finally, using $\frac{1}{P(1)}=\frac{n!\Gamma(n+2+\alpha)}{\Gamma(2n+2+\alpha)},$ we reduce the inequality to showing
\begin{equation}
\label{simetricnipolinom}
\sum_{\substack{j_1+j_2+\dots+j_n=l; \\ j_1,j_2,\dots,j_n\geq0}} t_1^{j_1}t_2^{j_2}\cdots t_n^{j_n} \leq \frac{(n+l-1)!({\alpha+1}+n)_l}{l!(n-1)!({\alpha+1}+2n)_l}.
\end{equation}
If we denote the LHS of the last inequality by $S_l,$ then it takes the form
$$S_l\leq D_l=\frac{(n+l-1)!({\alpha+1}+n)_l}{l!(n-1)!({\alpha+1}+2n)_l}.$$
Using Pfaff transformation
$$P(t)=F(-1-{\alpha}-n,-n;1;t)=(1-t)^nF(-n,n+2+\alpha;1;\frac{t}{t-1}),$$
we deduce that $t_j=\frac{\beta_j}{\beta_j+1}, 1\leq j \leq n,$ are all zeros of polynomial $F(-n,n+2+\alpha;1;1-t).$
Now, we will express this function as a polynomial on $t$ and find a closed formula for the elementary symmetric polynomials of its zeros, using Vieta's formulas. First, we write it by the definition and change the order of summation:
\[
\begin{split}
&\quad\sum_{k=0}^{n}\frac{(-n)_k(n+2+\alpha)_k}{(k!)^2}(1-t)^k\\
&=\sum_{k=0}^{n}\frac{(-n)_k(n+2+\alpha)_k}{(k!)^2}\sum_{j=0}^{k}\binom{k}{j}(-1)^jt^j\\
&=\sum_{j=0}^{n}(-1)^jt^j\sum_{k=j}^{n}\frac{(-n)_k(n+2+\alpha)_k}{(k!)^2}\binom{k}{j}.
\end{split}
\]
The inner sum can be further rewritten as
\[
\begin{split}
&\quad\sum_{k=j}^{n}\frac{(-n)_k(n+2+\alpha)_k}{(k!)^2}\binom{k}{j}\\
&=\frac{(-n)_j(n+2+\alpha)_j}{(j!)^3}\sum_{k=j}^{n}\frac{(-n+j)_{k-j}(n+2+\alpha+j)_{k-j}k!}{(j+1)^2_{k-j}(k-j)!}\\
&=\frac{(-n)_j(n+2+\alpha)_j}{(j!)^2}\sum_{s=0}^{n-j}\frac{(-n+j)_s(n+2+\alpha+j)_s}{(j+1)_{s}s!}.
\end{split}
\]
Denoting $m=n-j$, we have to find the sum $\sum_{s=0}^{m}\frac{(-m)_s(2j+m+\alpha+2)_s}{(j+1)_{s}s!}$. But, by the Lemma 3.1 with $\beta=j+1$ and $l=j+{\alpha+1},$ we see that it is equal to $(-1)^m\frac{(j+\alpha+2)_m}{(1+j)_m}.$
From this we conclude that
$$\sum_{k=0}^{n}\frac{(-n)_k(n+2+\alpha)_k}{(k!)^2}(1-t)^k=(-1)^n\frac{(\alpha+2)_n}{n!}\sum_{j=0}^{n}\frac{({\alpha+2}+n)_j}{({\alpha+2})_j}(-1)^j\binom{n}{j}t^j.$$
Hence, $t_j=\frac{\beta_j}{\beta_j+1}$ are the zeros of the polynomial $\sum_{j=0}^{n}\frac{({\alpha+2}+n)_j}{({\alpha+2})_j}(-1)^j\binom{n}{j}t^j$ and by Vieta's formulas:
\begin{equation}
\label{Viet}
	\sum t_{i_1}t_{i_2}\cdots t_{i_k}=(-1)^k\frac{({\alpha+2}+n-k)_k}{(\alpha+2n-k+2)_k}\binom{n}{k}.
\end{equation}
%Let us denote $$S_l=\sum_{j=1}^{n}A_j(1-t_j)t_j^l=t_1\big(A_1(1-t_1)t_1^{l-1}+A_2(1-t_2)t_2^{l-1}+\dots+A_n(1-t_n)t_n^{l-1}\big)$$
%$$+A_2(1-t_2)(t_2-t_1)t_2^{l-1}+A_3(1-t_3)(t_3-t_1)t_3^{l-1}+\dots+A_n(1-t_n)(t_n-t_1)t_n^{l-1}$$
%$$=t_1S_{l-1}+t_2\big(A_2(1-t_2)(t_2-t_1)t_2^{l-2}+A_3(1-t_3)(t_3-t_1)t_3^{l-2}+\dots+A_n(1-t_n)(t_n-t_1)t_n^{l-2}\big)+$$
%$$+A_3(1-t_3)(t_3-t_1)(t_3-t_2)t_3^{l-2}+\dots+A_n(1-t_n)(t_n-t_1)(t_n-t_2)t_n^{l-2}$$
%$$=t_1S_{l-1}+t_2\sum_{k=2}^{n}A_k(1-t_k)(t_k-t_1)t_k^{l-2}+t_3\sum_{k=3}^{n}A_k(1-t_k)(t_k-t_1)(t_k-t_2)t_k^{l-3}$$
%$$+\dots+t_nA_n(1-t_n)(t_n-t_1)(t_n-t_2)\cdots(t_n-t_{n-1})t_n^{l-n}.$$
%From
%$$S_{l+1}=t_1S_l+S_{1,l}\leq t_1D_l+S_{1,l}\leq D_{l+1},$$
%which is an equality for $l=0,$ while to prove $S_{1,l}\leq D_{l+1}-t_1D_l,$ we use
%$$S_{1,l+1}\leq t_2S_{1,l}+S_{2,l}\leq t_2(D_{l+1}-t_1D_l)+S_{2,l}\leq D_{l+2}-t_1D_{l+1}$$
%which follows from
%$$D_{l+2}-(t_1+t_2)D_{l+1}+t_1t_2D_l\geq S_{2,l}.$$
%After $n$ steps, we stand at
%$$D_{l+n}-(t_1+t_2+\dots+t_n)D_{l+n-1}+\dots+(-1)^nt_1t_2\cdots t_nD_l\geq S_{n,l}=0$$

For $n=1,$ this reduces to the Bernoulli inequality
\[
\bigg(\frac{\alpha+2}{\alpha+3}\bigg)^l \leq \frac{\alpha+2}{\alpha+l+2}.
\]
For $n=2,$ after introducing
\[
t_1 = \frac{\beta_1}{\beta_1+1} = \frac{\alpha+3 - \sqrt{\frac{2(\alpha+3)}{\alpha+4}}}{\alpha+5}, \quad
t_2 = \frac{\beta_2}{\beta_2+1} = \frac{\alpha+3 + \sqrt{\frac{2(\alpha+3)}{\alpha+4}}}{\alpha+5},
\]
we stand at:
\[
\frac{t_2^{l+1} - t_1^{l+1}}{t_2 - t_1} \leq \frac{(l+1)(\alpha+3)(\alpha+4)}{(\alpha+l+3)(\alpha+l+4)}.
\]
For $l=0$ and $l=1,$ we have equality. We will proceed by induction. If we suppose that the inequality holds for $k,$ then for $l+1$ we have:
\[
\begin{split}
\frac{t_2^{l+2} - t_1^{l+2}}{t_2 - t_1} &= \frac{t_2 (t_2^{l+1} - t_1^{l+1})}{t_2 - t_1} + t_1^{l+1} \\
&\leq \frac{(l+1)(\alpha+3)(\alpha+4)}{(\alpha+l+3)(\alpha+l+4)} t_2 + t_1^{l+1} \\
&\leq \frac{(l+2)(\alpha+3)(\alpha+4)}{(\alpha+l+4)(\alpha+l+5)}.
\end{split}
\]
The last inequality is proven by induction again. It is equality for $l=0,$ while
\[
\frac{t_1^{l+1}}{(\alpha+3)(\alpha+4)} \leq \frac{1}{\alpha+l+4} \bigg( \frac{l+2}{\alpha+l+5} - \frac{t_2(l+1)}{\alpha+l+3} \bigg)
\]
gives
\[
\begin{split}
\frac{t_1^{l+2}}{(\alpha+3)(\alpha+4)} &\leq \frac{t_1}{\alpha+l+4} \bigg( \frac{l+2}{\alpha+l+5} - \frac{t_2(l+1)}{\alpha+l+3} \bigg) \\
&\leq \frac{1}{\alpha+l+5} \bigg( \frac{l+3}{\alpha+l+6} - \frac{t_2(l+2)}{\alpha+l+4} \bigg).
\end{split}
\]

since the last inequality is equivalent to
$$(t_1+t_2)(l+2)({\alpha+l+3})({\alpha+l+6})- t_1t_2(l+1)({\alpha+l+5})_2-(l+3)({\alpha+l+3})_2\leq 0.$$
This can be easily checked using \eqref{Viet}.

Similarly, for $n=3$ we use the identity
$$S_{l+1}(t_1,t_2,t_3)=t_1S_l(t_1,t_2,t_3)+t_2S_l(t_2,t_3)+t_3^{l+1}.$$
This holds for $l=0$. If this is true for $l,$ then for $l+1,$ we have to prove:
$$t_1S_l(t_1,t_2,t_3)+t_2S_l(t_2,t_3)+t_3^{l+1}\leq t_1D_l+t_2S_l(t_2,t_3)+t_3^{l+1}\leq D_{l+1},$$
which is easily seen to be true for $l=0,$ since it reads as $t_1D_0+t_2+t_3=t_1+t_2+t_3=S_1=D_1.$
Now, to prove $t_2S_l(t_2,t_3)+t_3^{l+1}\leq D_{l+1}-t_1D_l,$ (the last inequality) we again use the induction:
\[
\begin{split}
&t_2S_{l+1}(t_2,t_3)+t_3^{l+2}=t_2(t_2S_l(t_2,t_3)+t_3^{l+1})+t_3^{l+2}\\
&\leq t_2(D_{l+1}-t_1D_l)+t_3^{l+2}\leq D_{l+2}-t_1D_{l+1},
\end{split}
\]
or, equivalently:
$$D_{l+2}-(t_1+t_2)D_{l+1}+t_1t_2D_l\geq t_3^{l+2}.$$
For $l=0$ this reads as $D_2-(t_1+t_2)D_{1}+t_1t_2D_0\geq t_3^2$ which, by using $S_0=D_0$ and $S_1=D_1,$
reduces to $D_2\geq (t_1+t_2+t_3)^2-(t_1t_2+t_2t_3+t_3t_1).$ This can be easily checked by using \eqref{Viet}.
Finally, using the induction again we see that our inequality follows from:
$$D_{l+3}-(t_1+t_2+t_3)D_{l+2}+(t_1t_2+t_2t_3+t_3t_1)D_{l+1}-t_1t_2t_3D_l\geq 0.$$

Our approach become much more complicated  and needs many modifications for $n\geq 4,$ hence we were not been able to prove the theorem in full generality for this case.

\section{Consequences to Fock space case}

Here we will show that an easy limitting argment with the measure $d\mu_{n,\alpha}$ gives the appropriate Fock space inequality from \cite{Kalaj} for each $n\geq 1$. Indeed, from
\[
\begin{split}
&\frac{k^2(k-1)^2\cdots(k-n+1)^2}{n!({\alpha+2})_n}\int_{0}^{1}\frac{t^{k-n}(1-t)^{{\alpha}+2n}}{F(-1-{\alpha}-n,-n;1;t)} d t\\
&\leq \frac{1}{{\alpha+1}+2n}\frac{k!\Gamma({\alpha+2})}{\Gamma(k+{\alpha+2})} \quad \text{for} \quad k\geq n,
\end{split}
\]
after the substitution $y=R t$ with ${(\alpha+2)}=R,$ we get:
\[
\begin{split}
&\int_{0}^{R}\frac{y^{k-n}(1-y/R)^{R+2n-2}}{F(1-R-n,-n;1;y/R)} d y\\
&\leq \frac{n!((k-n)!)^2}{k!}\frac{R^{k+1-n}(R)_n\Gamma(R)}{(R+2n-1)\Gamma(k+R)} \quad \text{for} \quad k\geq n.
\end{split}
\]

Now,
$$F(1-n-R,-n;1;y/R)=\sum_{k=0}^{n}\frac{(-n)_k(1-n-R)_k}{(k!)^2}\frac{y^k}{R^k},$$
with $R\rightarrow +\infty$ tends to
$$P_n(y)=\sum_{k=0}^{n}\frac{(-n)_k(-1)^k}{(k!)^2}y^k=L_n(-y),$$
where $L_n$ is the $n-$th degree Laguerre polynomial. Hence, taking the limit when $R\rightarrow +\infty$ in the last inequality, we get:
\[
\begin{split}
&\int_{0}^{+\infty}\frac{y^{k-n}e^{-y}}{L_n(-y)} d y\\
&\leq \frac{n!((k-n)!)^2}{k!} \quad \text{for} \quad k\geq n.
\end{split}
\]

\subsection*{Acknowledgements}
The authors would like to express their sincere gratitude to the referee for their insightful comments and constructive suggestions, which significantly enhanced the clarity and quality of this manuscript. Their expertise and attention to detail were invaluable throughout the revision process.

\end{document}